\numberwithin{equation}{section}
\theoremstyle{plain}
\newtheorem{theorem}{Theorem}[section]
\newtheorem{lemma}[theorem]{Lemma}
\theoremstyle{definition}
\theoremstyle{remark}
\newtheorem{case[theorem]}{Case}
\title{A note on the slightly supercritical Navier Stokes equations in the plane}
\author{Nets Hawk Katz and Andrew Tapay}
\begin{document}

\maketitle

\begin{abstract} We produce a new proof of Tao's result on the slightly supercritical
Navier Stokes equations. 
Our proof  has the advantage that it works in the plane
while Tao's proof works
only in dimensions three and higher. We accomplish this by studying the problem as a
system of differential inequalities
on the $L^2$ norms of the Littlewood Paley decomposition, along the lines of Pavlovic's
proof of the Beale-Kato-Majda
theorem.

\end{abstract}

\section{Introduction}

Our goal is to study the slightly supercritical Navier Stokes equations. We will work in
an arbitrary dimension
${\bf R}^d$, although our results are only new when $d=2$. The system we are dealing with
then is
\begin{equation}\label{NS1}{\partial u \over \partial t} + u \cdot \nabla u= -D^2 u  +
\nabla p, \end{equation}
together with the condition of incompressibility
\begin{equation}\label{NS2} \nabla \cdot u =0, \end{equation}
and initial conditions
\begin{equation}  u(x,0) =u_0(x,0). \end{equation}

Here $D$ represents a Fourier multiplier with symbol  ${|\xi|^{d+2 \over 4} \over (\log
(2+|\xi|) )^\gamma}$
with $\gamma \leq {1 \over 4}.$ The aim of this note is to prove:

\bigskip

\begin{theorem} \label{main} The above system with smooth, compactly supported initial
conditions has a global in time
smooth solution. \end{theorem}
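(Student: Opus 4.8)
\bigskip

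The plan is to reduce Theorem~\ref{main} to an \emph{a priori} estimate and then to establish that estimate by controlling the Littlewood--Paley profile of $u$. By the usual contraction argument smooth compactly supported data give a unique maximal smooth solution on an interval $[0,T^*)$ --- the dissipation $D^2$ is, up to its logarithmic factor, the operator $(-\Delta)^{(d+2)/4}$, far more regularizing than is needed to run the fixed-point scheme in $H^s$ for any fixed large $s$ --- and the same theory gives the continuation criterion: the solution extends past $T^*$ as soon as $\sup_{t<T^*}\|u(t)\|_{H^s}<\infty$. It therefore suffices, assuming $T^*<\infty$, to bound every Sobolev norm of $u$ uniformly on $[0,T^*)$. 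I would also record the exact energy identity $\frac{d}{dt}\|u\|_{L^2}^2=-2\langle D^2u,u\rangle\le 0$, giving $\|u(t)\|_{L^2}\le\|u_0\|_{L^2}=:E_0^{1/2}$ for all $t$ together with $\int_0^\infty\langle D^2u,u\rangle\,dt\le\tfrac12 E_0$; since $L^2$ is precisely the scaling-critical space for the exponent $(d+2)/4$, this is a critical bound and nothing stronger is free. (In the plane there is one genuinely subcritical bound available for free: the vorticity satisfies $\partial_t\omega+u\cdot\nabla\omega=-D^2\omega$, whence $\|\omega(t)\|_{L^2}$ is non-increasing and $u\in L^\infty_t\dot H^1$; I would use this to lighten the two-dimensional bookkeeping, though the scheme below does not require it.)

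Next I would derive the system of differential inequalities. Write $u_j=\Delta_j u$, $a_j(t)=\|u_j(t)\|_{L^2}$, and $d_j\simeq 2^{(d+2)j/2}\langle j\rangle^{-2\gamma}$ for the dissipation rate at frequency $2^j$. Localizing the equation to frequency $2^j$, pairing with $u_j$ in $L^2$, discarding the pressure (since $\nabla\cdot u_j=0$), bounding the dissipation below by $d_j a_j^2$, and writing the nonlinearity as $\nabla\cdot(u\otimes u)$ so that a Bony paraproduct decomposition and Bernstein's inequality apply, one arrives at a system of the form
\[
\frac{d}{dt}a_j^2+2d_j a_j^2\ \lesssim\ 2^j a_j\sum_{k\ge j-C}2^{kd/2}a_k^2\ +\ 2^j a_j^2\sum_{k\le j+C}2^{kd/2}a_k ,
\]
to be read with the two \emph{a priori} constraints $\sum_j a_j^2\le E_0$ and $\sum_j\int_0^T d_j a_j^2\,dt\lesssim E_0$. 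The two nonlinear contributions are the high--high$\to$low and the low--high/high--low interactions, and the problem has now been recast as the finite-dimensional-looking question of whether this system, with these dissipation rates, can drive $\mathcal E_s(t):=\sum_j 2^{2sj}a_j(t)^2$ to infinity in finite time.

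The heart of the argument --- and the only place the hypothesis $\gamma\le\tfrac14$ is used --- is the bootstrap that rules this out. Fixing $s>0$, multiplying the $j$-th inequality by $2^{2sj}$ and summing, the dissipation contributes a term comparable to $-\sum_j 2^{(2s+(d+2)/2)j}\langle j\rangle^{-2\gamma}a_j^2$: one full derivative beyond $\mathcal E_s$, but degraded by the unbounded weight $\langle j\rangle^{2\gamma}$. The nonlinear terms, once Cauchy--Schwarz and interpolation against the critical energy bound (and, when $d=2$, against the enstrophy bound) have been applied, are dominated by quantities that the \emph{undegraded} dissipation $\sum_j 2^{(2s+(d+2)/2)j}a_j^2$ would absorb outright; the sole obstruction is thus the weight $\langle j\rangle^{2\gamma}$. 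I would remove it by splitting each frequency sum at a cutoff $2^{J}$ chosen as a power of the current value of $\mathcal E_s$: on frequencies $\le 2^{J}$ the weight costs only $\langle J\rangle^{2\gamma}$ and the top order is paid for with the $L^2$ (and, in the plane, $\dot H^1$) bounds, while on frequencies $>2^{J}$ the top-order quantity is reabsorbed into the degraded dissipation at the price of a factor $\langle J\rangle^{2\gamma}$. Tracking the accumulated losses and using that $2^{J}$ is a power of $\mathcal E_s$ --- so $\langle J\rangle$ becomes $\log\mathcal E_s$ --- leads to
\[
\frac{d}{dt}\mathcal E_s\ \lesssim\ C(E_0,s)\,\mathcal E_s\,\bigl(\log(e+\mathcal E_s)\bigr)^{4\gamma},
\]
and the scalar differential inequality $\dot y\lesssim y\,(\log(e+y))^{p}$ has a solution defined for all time precisely when $p\le 1$, i.e.\ when $4\gamma\le 1$. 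Hence $\sup_{[0,T]}\mathcal E_s(t)<\infty$ for every $T<T^*$ and every $s$; combined with the continuation criterion this forces $T^*=\infty$, and smoothness of the global solution follows by standard parabolic bootstrapping. The one genuine obstacle is the logarithmic mismatch, at the top of the spectrum, between the rate $d_j\simeq 2^{(d+2)j/2}\langle j\rangle^{-2\gamma}$ at which the dissipation damps scale $2^j$ and the rate $\simeq 2^j$ at which the nonlinearity moves $L^2$ mass across scales; everything else --- the local theory, the paraproduct estimates, the low-frequency bookkeeping --- is routine, and the point of the threshold $\gamma=\tfrac14$ is precisely to make that mismatch contribute only an Osgood-integrable logarithm to the Gr\"onwall inequality for $\mathcal E_s$.
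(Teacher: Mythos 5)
Your framework---local existence, continuation criterion, energy identity, and the Littlewood--Paley system of differential inequalities---is sound, and the system you write down is essentially the paper's Lemma 3.2. The gap is precisely the step you call the heart of the argument: the inequality $\frac{d}{dt}\mathcal{E}_s\lesssim C(E_0,s)\,\mathcal{E}_s\,(\log(e+\mathcal{E}_s))^{4\gamma}$ is asserted rather than derived, and it does not in fact follow from the system you set up. Test it against the configuration in which essentially all of the energy sits at a single frequency $2^{j_0}$, so $a_{j_0}^2\sim E_0$ and $\mathcal{E}_s\sim 2^{2sj_0}E_0$. The weighted low--high term is then $\sim 2^{(2s+(d+2)/2)j_0}a_{j_0}^3\sim E_0^{1/2}\,(\mathcal{E}_s/E_0)^{(d+2)/(4s)}\,\mathcal{E}_s$, a \emph{power} of $\mathcal{E}_s$ rather than a power of its logarithm, and its ratio to the degraded dissipation is $E_0^{1/2}j_0^{2\gamma}$, which exceeds $1$ once $j_0$ is large. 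The obstruction is that your prefactor $\sum_k 2^{kd/2}a_k$ (morally $\|u\|_{L^\infty}$) has no a priori bound; estimating it through a frequency cutoff $2^J$ that is a power of $\mathcal{E}_s$ gives $E_0^{1/2}2^{Jd/2}$, again a power of $\mathcal{E}_s$, and carrying this through Young's inequality against the dissipation yields $\dot{\mathcal{E}}_s\lesssim\mathcal{E}_s^{1+(d+2)/(4s)}$, which is consistent with finite-time blow-up. No pointwise-in-time Gronwall inequality with only logarithmic losses can be extracted from these estimates: the problem is genuinely supercritical, and the logarithm can only enter once the \emph{time-integrated} dissipation budget $\int_0^\infty\langle Du,Du\rangle\,dt\lesssim E^2$ is brought into play---you list that bound as a constraint, but your Gronwall step never uses it.

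This is exactly where the paper goes a different way. It works with the critical Besov quantity $c(t)=\sum_j 2^{(d+2)j/2}\|P_j u\|_{L^2}$, proves a barrier estimate showing that while $c(t)\le 2^{k+1}$ all frequencies above the dissipation frontier $j_{k,u}$ (defined by $2^k=2^{(d+2)j/2}/j^{\gamma}$) are exponentially suppressed, observes that together with the energy bound this leaves only $O(\log k)$ scales able to drive the growth of $c(t)$ from $2^k$ to $2^{k+1}$, pigeonholes to find one scale whose amplitude grows by $\gtrsim 2^{k-s\epsilon}$ over a time interval of length $\gtrsim 2^{-k-s\epsilon}$, and converts this into a lower bound $\gtrsim k^{-4\gamma}$ on the dissipation expended during each doubling of $c(t)$; summing over $k$ contradicts the finite dissipation budget exactly when $\gamma\le\tfrac14$. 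Note also that your route is in substance Tao's (a high-regularity energy estimate closed by an Osgood lemma), which the paper explicitly points out fails at $d=2$ because of an endpoint Sobolev embedding; invoking ``interpolation against the enstrophy bound'' does not by itself address that failure, and the two-dimensional case is the only new content of the theorem. To repair the argument you would need either to carry out Tao's scheme in detail and resolve the $d=2$ endpoint, or to replace the pointwise Gronwall step by a time-integrated dissipation-counting argument of the paper's type.
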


This theorem is only new in the case $d=2$. For other dimensions, the results
can be found in
\cite{Tao}.

We remark briefly on the history of the problem. The slightly supercritical Navier Stokes equations
were introduced by Tao in \cite{Tao}. The goal was to quantify exactly how close standard techniques
are to proving global solvability of the standard Navier Stokes equations. The conclusion was that
in dimensions higher than two, these techniques do not get very close but get a little closer
than is suggested by the standard notion of criticality. This generalized a similar observation about
slightly supercritical wave equations with radial symmetry established in \cite{Tao3}. Since Tao's
paper \cite{Tao}, various authors have approached Tao's result in slightly different settings
and with different points of view. (See e.g. \cite{DKV}, \cite{CS}.)

We remark apologetically that our own result is not physically well motivated. In the plane,
the standard Navier Stokes equations are critical and so the slightly supercritical equation
have less dissipation than the standard ones. It is curious however that Tao's argument seemed
not to cover this case.

Our own approach is inspired by Remark 1.2 in Tao's paper \cite{Tao}. There it is shown that if all the
energy of a flow
is concentrated in a single scale, one should expect a result up to $\gamma= {1 \over
2}$. Of course, the
energy need not be concentrated in a single scale. Tao's method of making the heuristic
precise involves
some Sobolev embedding type results that reach an endpoint at $d=2$ where they fail to
work. We study the
scales somewhat more precisely by estimating differential inequalities on the $L^2$ norms
of
Littlewood Paley components of the flow. This is an adaptation of an argument of Pavlovic
for the
classic Beale-Kato-Majda theorem which may be found in her thesis. \cite{Pav} We see that
the reason
that one cannot get an exponent stronger than ${1 \over 4}$ is that one can't rule out
that the singularity
happens merely by making successive scales pass supercriticality without their attaining the
majority of the available
energy.

{\bf Acknowledgements:}  The first author is partially supported by NSF grant DMS-1001607
and a fellowship
from the Guggenheim foundation. He would like to thank Terry Tao for suggesting the
problem. We would also like to thank Vlad Vicol for a helpful discussion.

\section{Littlewood Paley trichotomy}

We introduce a standard Littlewood-Paley decomposition. Let $\phi: {\bf R}^+
\longrightarrow {\bf R}$, be a smooth function
so that $\phi(x)=1$ for $0<x<1$, and $\phi(x)=0$ for $x>2$. We then define $P_0$ to be
the multiplier operator whose symbol is $\phi(|\xi|)$.  We define for $j>0$ that $P_j$ is
the multiplier
operator whose symbol is $\phi( {|\xi| \over 2^{j} })  - \phi( {|\xi| \over 2^{j-1}} )$.
Note that the identity
is the sum of the $P_j$'s since the sum of the symbols telescopes. To simplify notation,
we define $P_k=0$ when
$k$ is negative.

We are interested in the development of $||P_j u||_{L^2}$ over time assuming that $u$
satisfies
the slightly supercritical Navier Stokes equations. Letting $\omega=\nabla \times u$ be
the vorticity,
we see it is entirely equivalent to study the development of $||P_j \omega||_{L^2} \sim
2^j ||P_j u||_{L^2}.$
The norms are equivalent because of the divergence free property of $u$. Passing to the
vorticity helps us
since it eliminates the pressure which is secretly a non-linear term in $u$. We obtain
the vorticity
form of the slightly supercritical Navier Stokes equation:

$${\partial \omega  \over \partial t} +  (\nabla \times (u \cdot \nabla) u)  + (u \cdot
\nabla) \omega
= -D^2 \omega.$$

Here the nonlinear part has broken into two terms, commonly referred to as the vortex
stretching term, which
vanishes when $d=2$ and the advection term. (For our purposes, both will be of equal
strength. Applying a
Littlewood-Paley component with $\langle,\rangle$ denoting $L^2$ inner product in space,
we get

\begin{equation} \label{terms}  \langle {\partial P_j \omega  \over \partial t}, P_j
\omega  \rangle
=- \langle P_j((\nabla \times (u \cdot \nabla) u)  + (u \cdot \nabla) \omega), P_j \omega
 \rangle
 - \langle D^2 P_j \omega, P_j \omega \rangle. \end{equation}

Now by estimating the right hand side, we get bounds for the rate of change in time of
$||P_j \omega||_{L^2}$.

\bigskip

\begin{lemma} \label{trick} With $\omega$ smooth and satisfying the vorticity form of the
slightly supercritical Navier Stokes
equations, there is a universal constant $C>0$ so that we have the estimate
 
\begin{align} \label{est}
{d \over dt}  ||P_j \omega||_{L^2} &\leq   C \left(
\left( \sum_{k \leq j+5}  2^{dk \over 2} ||P_{k} \omega||_{L^2} \right)
\sum_{\alpha=-5}^5 ||P_{j+\alpha} \omega||_{L^2}
+\sum_{k \geq j} 2^{{dj \over 2}} \sum_{\alpha=-5}^5 ||P_k \omega||_{L^2} ||P_{k+\alpha}
\omega||_{L^2} \right ) \\
&- {2^{(d+2) j \over 2} \over j^{2 \gamma} } ||P_j \omega||_{L^2}. \end{align}

\end{lemma}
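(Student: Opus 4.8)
\emph{Overall plan.} I would read the inequality straight off the energy identity \eqref{terms}. Writing $\mathrm{NL} := \nabla\times(u\cdot\nabla u) + (u\cdot\nabla)\omega$ and using $\frac{d}{dt}\|P_j\omega\|_{L^2}^2 = 2\langle \partial_t P_j\omega, P_j\omega\rangle$, identity \eqref{terms} becomes
\[
\|P_j\omega\|_{L^2}\,\frac{d}{dt}\|P_j\omega\|_{L^2} = -\langle P_j\,\mathrm{NL},P_j\omega\rangle - \langle D^2P_j\omega,P_j\omega\rangle .
\]
(When $\|P_j\omega\|_{L^2}$ vanishes I would run the argument with $(\varepsilon^2+\|P_j\omega\|_{L^2}^2)^{1/2}$ and let $\varepsilon\to0^+$, which costs nothing.) It then suffices to bound the dissipative term below by $\gtrsim 2^{(d+2)j/2}j^{-2\gamma}\|P_j\omega\|_{L^2}^2$ and $|\langle P_j\,\mathrm{NL},P_j\omega\rangle|$ above by $\|P_j\omega\|_{L^2}$ times the parenthesized expression in \eqref{est}, then divide by $\|P_j\omega\|_{L^2}$ and absorb constants (for the dissipative term, up to a harmless normalization of $D$; only the exponent matters downstream). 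The dissipation bound is immediate: the symbol of $P_j$ lives in $\{2^{j-1}\le|\xi|\le 2^{j+1}\}$, where the symbol $|\xi|^{(d+2)/2}(\log(2+|\xi|))^{-2\gamma}$ of $D^2$ is $\gtrsim 2^{(d+2)j/2}j^{-2\gamma}$ (for $j\ge1$; $j=0$ is trivial), so $\langle D^2P_j\omega,P_j\omega\rangle=\int |\xi|^{(d+2)/2}(\log(2+|\xi|))^{-2\gamma}|\widehat{P_j\omega}(\xi)|^2\,d\xi$ dominates $2^{(d+2)j/2}j^{-2\gamma}\|P_j\omega\|_{L^2}^2$.

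\emph{The nonlinearity.} Using $u\cdot\nabla u = \nabla(\tfrac12|u|^2)-u\times\omega$ one gets $\nabla\times(u\cdot\nabla u)=(u\cdot\nabla)\omega-(\omega\cdot\nabla)u$, so $\mathrm{NL}$ is a fixed linear combination of the advection term $A:=(u\cdot\nabla)\omega$ and the vortex stretching term $S:=(\omega\cdot\nabla)u$ (the latter absent when $d=2$). Throughout I would use Bernstein, $\|P_kf\|_{L^\infty}\lesssim 2^{dk/2}\|P_kf\|_{L^2}$ and $\|P_kf\|_{L^2}\lesssim 2^{dk/2}\|P_kf\|_{L^1}$, together with the Biot--Savart relations $\|P_ku\|_{L^2}\sim 2^{-k}\|P_k\omega\|_{L^2}$ and $\|\nabla P_ku\|_{L^2}\sim\|P_k\omega\|_{L^2}$. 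For $S$, Cauchy--Schwarz gives $|\langle P_jS,P_j\omega\rangle|\le\|P_jS\|_{L^2}\|P_j\omega\|_{L^2}$; expanding $S=\sum_{k,l}(P_k\omega)\cdot\nabla(P_lu)$ and decomposing $P_jS$ into the Littlewood--Paley trichotomy, the two unbalanced regimes ($k\le l-5$, $|l-j|\le 5$; or $l\le k-5$, $|k-j|\le5$) put the frequency-$\sim 2^j$ factor in $L^2$ and the low factor in $L^\infty$, contributing $\big(\sum_{k\le j+5}2^{dk/2}\|P_k\omega\|_{L^2}\big)\sum_{|\alpha|\le5}\|P_{j+\alpha}\omega\|_{L^2}$, while in the high--high regime ($|k-l|\le5$, $k\ge j-5$) the output reaches frequency $\sim 2^j$ only through cancellation, so I would estimate the product in $L^1$, invoke $\|P_j(\cdot)\|_{L^2}\lesssim 2^{dj/2}\|\cdot\|_{L^1}$, and put both factors in $L^2$, contributing $2^{dj/2}\sum_{k\ge j-5}\sum_{|\alpha|\le5}\|P_k\omega\|_{L^2}\|P_{k+\alpha}\omega\|_{L^2}$. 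These are exactly the two terms of \eqref{est}.

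\emph{The advection term (the hard part).} For $A=(u\cdot\nabla)\omega$ a bare Cauchy--Schwarz estimate of $\|P_jA\|_{L^2}$ is lossy: when the velocity sits at frequency $2^k$ with $k\ll j$ the derivative falls on $\omega$ at frequency $\sim 2^j$, producing an unacceptable factor $2^{j-k}$. The remedy is the divergence-free structure. I would set $P_{<m}:=\sum_{k<m}P_k$ and split $u=P_{<j-3}u+P_{\ge j-3}u$. The high-velocity piece $\langle P_j((P_{\ge j-3}u)\cdot\nabla\omega),P_j\omega\rangle$ is handled exactly like $S$ — the velocity being at frequency $\ge 2^{j-3}$ forces one of the trichotomy regimes — and reproduces the two terms of \eqref{est}. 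For the low-velocity piece I would write
\[
\langle P_j((P_{<j-3}u)\cdot\nabla\omega),P_j\omega\rangle = \langle [P_j,(P_{<j-3}u)\cdot\nabla]\omega,P_j\omega\rangle + \langle (P_{<j-3}u)\cdot\nabla P_j\omega,P_j\omega\rangle .
\]
The second term vanishes, since $\int (P_{<j-3}u)\cdot\nabla|P_j\omega|^2 = -\int (\nabla\cdot P_{<j-3}u)|P_j\omega|^2 = 0$ because $\nabla\cdot P_{<j-3}u = P_{<j-3}(\nabla\cdot u)=0$. For the commutator I would invoke the classical bound $\|[P_j,b\cdot\nabla]f\|_{L^2}\lesssim\|\nabla b\|_{L^\infty}\|f\|_{L^2}$ (integrate by parts against the kernel $2^{jd}\psi(2^j(x-y))$ of $P_j$ and use $|b(x)-b(y)|\le\|\nabla b\|_{L^\infty}|x-y|$) with $b=P_{<j-3}u$; as $b$ has frequency $<2^{j-3}$, only $P_{[j-5,j+5]}\omega$ contributes, and $\|\nabla P_{<j-3}u\|_{L^\infty}\le\sum_{k<j-3}\|\nabla P_ku\|_{L^\infty}\lesssim\sum_{k<j-3}2^{dk/2}\|P_k\omega\|_{L^2}$, which reproduces the first term of \eqref{est}.

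\emph{Conclusion and obstacle.} Collecting the dissipative lower bound and the nonlinear upper bounds in the rewritten \eqref{terms} and dividing by $\|P_j\omega\|_{L^2}$ yields \eqref{est}. The one genuinely non-mechanical step is the last one: recognizing that Cauchy--Schwarz alone is insufficient for advection, and that the divergence-free condition — via the commutator and the vanishing of $\langle (P_{<j-3}u)\cdot\nabla P_j\omega,P_j\omega\rangle$ — is precisely what recovers the missing derivative. Everything else is bookkeeping of the Littlewood--Paley trichotomy, checking that each of the (finitely many) interaction regimes lands in one of the two sums of \eqref{est}, with the fixed ranges $|\alpha|\le5$ absorbing the frequency-comparability fuzziness.
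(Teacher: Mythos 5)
Your proposal is correct and follows essentially the same route as the paper: the Littlewood--Paley trichotomy with the lowest-frequency factor estimated in $L^\infty$ via Bernstein, an $L^1$-based Bernstein bound for the high--high interactions, a direct symbol lower bound for the dissipation, and a commutator argument exploiting the divergence-free structure to recover the derivative lost in the low-velocity advection terms. The only cosmetic difference is in packaging that last cancellation: you kill the transport term $\langle (P_{<j-3}u)\cdot\nabla P_j\omega,\,P_j\omega\rangle$ outright by integration by parts and then bound the single commutator $[P_j,(P_{<j-3}u)\cdot\nabla]$, whereas the paper expresses the same cancellation as almost-skew-adjointness of $P_jP_{j\pm1}P_lu\cdot\nabla$ against $Q\omega$ and solves for the non-commutator term in terms of commutators --- the two formulations are equivalent.
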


Lemma \ref{trick} is, by now, a completely standard application of the Littlewood Paley
trichotomy see \cite{Tao2}.
All terms except for the dissipation term appear in Pavlovic's proof of Beale-Kato-Majda
see \cite{Pav}.

The last term of the inequality \ref{est}, comes from the dissipation, namely the last
term of equation \ref{terms}.
The remaining terms on the right hand side come from estimation of the nonlinear term,
namely the first term
on the right in \ref{terms}.

The nonlinear term is estimated by breaking each appearance of $u$ and $\omega$ on the
left hand side
of the inner product into a sum of its Littlewood Paley components. We observe that
purely on frequency
support grounds, only three types of terms survive: the low-high terms, high-low terms,
and high-high terms.
(Since the frequency of the right hand side of the inner product in \ref{terms} is held
fixed at around $2^j$, it is
the only region of frequency support of the product on the left hand side that we need
concern ourselves with.)
We estimate each term, an integral of a product of three components by controlling one
$L^{\infty}$ norm and
two $L^2$ norms. We always use the $L^{\infty}$ norm of the lowest frequency component.
We replace the $L^{\infty}$
norm by an $L^2$ norm using Bernstein's inequality. The right-most term of
the first line of \ref{est} comes from the high-high part. (We've used none of the
structure of the equation and
in fact would get a better estimate from applying a div-curl lemma, but we don't need
this.)

The other terms in the first line of the right hand side of \ref{est} come from the
low-high and high-low terms.
Most of these are quite straightforward. But there is one type of term, coming from the
advection part, which has
to be controlled using a commutator inequality. Namely, we have to obtain estimates on
terms of the form
$$\langle P_l u \cdot \nabla  P_{j \pm 1} \omega , P_j \omega \rangle,$$
with $l$ significantly smaller than $j$. The apparent difficulty here is that we have an
extra derivative falling
on the relatively high frequency part $P_{j \pm 1 } \omega$. We observe however that if
$Q$ is a multiplier supported
at frequency approximately $2^j$ (whose multiplier has derivative bounded by $2^{-j}$)
then we can estimate
$$\langle [Q,P_l u \cdot  \nabla] f,g\rangle \lesssim  2^l ||P_l u||_{L^{\infty}}
||f||_{L^2} ||g||_{L^2}.$$
This commutator idea allows us to exploit the fact that $P_l u \cdot \nabla$ is
antisymmetric because of the
div-free property of $P_l u$. To wit: If we let
$$Q=\sum_{\beta=-3}^3 P_{j+\beta}$$
we can calculate
\begin{align} 
   &\langle P_l u \cdot \nabla  P_{j \pm 1} \omega , P_j \omega \rangle  \\
  =&  \langle P_l u \cdot \nabla  P_{j \pm 1} Q \omega , P_j Q \omega \rangle \\
  =& \langle P_j P_{j \pm 1} P_l u \cdot \nabla Q \omega, Q \omega \rangle
    + \langle  [P_{j \pm 1},P_l u \cdot \nabla] Q \omega, P_j Q \omega \rangle
  \end{align} 
The second term is harmless because it is a commutator and the first term is harmless
because $P_j P_{j \pm 1} P_l u
\cdot \nabla$ is almost skew adjoint - namely:
$$\langle P_j P_{j \pm 1} P_l u \cdot \nabla Q \omega ,Q \omega \rangle
=-\langle P_j P_{j \pm 1} P_l u \cdot \nabla Q \omega ,Q \omega \rangle  +
\langle Q \omega,  [P_{j \pm 1} P_j, P_l u \cdot \nabla] Q \omega \rangle.$$
Thus we can solve for the noncommutator term in terms of the commutator term.

\bigskip

We will now use Lemma \ref{trick} as a black box.

\section{Main Argument}

We are now ready to proceed with the main part of the argument. The idea very much follows
remark 1.2 of \cite{Tao}. Namely, we will produce a measure of progress towards blow-up 
and
we will show that infinite progress towards blow-up requires infinite dissipation of 
energy
thus leading to a contradiction.

We introduce some notation. We define
$$b_j(t)=2^{{(d+2) j \over 2}} ||P_j u(t)||_{L^2}.$$
Thus by Bernstein's inequality, we have that $b_j(t)$ controls $||P_j 
\omega(t)||_{L^{\infty}}$.
We let
$$c(t)=\sum_{j=0}^{\infty}  b_j(t).$$
One may note that $c(t)$ is the norm of $u$ in the Besov space $B^{{d+2 \over 2},1}_2$. 
This is the
norm which implicitly controls growth in the Beale-Kato-Majda argument. Our argument will 
primarily
consist in showing that $c(t)$ remains bounded.

We rewrite Lemma \ref{trick} in terms of the quanitities $b_j(t)$.

\bigskip

\begin{lemma} \label{best} Under the hypotheses of Theorem \ref{main}, there is a 
universal constant $C>0$
so that we have the system of inequalities
\begin{equation} \label{est2}
{db_j \over dt}   \leq   C \left(
\left( \sum_{k \leq j+5}  b_k(t) \right)
\sum_{\alpha=-5}^5 b_{j+\alpha}(t)
+\sum_{k \geq j} 2^{{d(j-k)}} \sum_{\alpha=-5}^5 b_k(t) b_{k+\alpha}(t)
\right )
- {2^{(d+2) j \over 2} \over j^{2 \gamma} } b_j(t).
\end{equation}
\end{lemma}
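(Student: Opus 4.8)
The plan is simply to translate the differential inequality of Lemma \ref{trick} from the variables $\|P_j\omega\|_{L^2}$ into the variables $b_j(t)$. Recall that, because $u$ is divergence free, $\|P_j\omega\|_{L^2}\sim 2^j\|P_ju\|_{L^2}$, and by definition $b_j(t)=2^{(d+2)j/2}\|P_ju\|_{L^2}$. Hence $\|P_j\omega\|_{L^2}\sim 2^{-dj/2}\,b_j(t)$, up to a universal constant that we absorb into $C$. The strategy is to multiply the inequality \eqref{est} by the conversion factor $2^{(d+2)j/2-j}=2^{dj/2}$ appropriate to producing $\tfrac{d}{dt}b_j$ from $\tfrac{d}{dt}\|P_j\omega\|_{L^2}$, and then rewrite every occurrence of $\|P_k\omega\|_{L^2}$ on the right-hand side as $2^{-dk/2}b_k$.

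First I would handle the dissipation term: the last term of \eqref{est} is $-\,2^{(d+2)j/2}j^{-2\gamma}\|P_j\omega\|_{L^2}$; multiplying by $2^{dj/2}$ and substituting $\|P_j\omega\|_{L^2}=2^{-dj/2}b_j$ reproduces exactly $-\,2^{(d+2)j/2}j^{-2\gamma}b_j$, the dissipation term of \eqref{est2}. Next, the first (low-high/high-low) group: after multiplying by $2^{dj/2}$, the factor $\sum_{k\le j+5}2^{dk/2}\|P_k\omega\|_{L^2}$ becomes $\sum_{k\le j+5}2^{dk/2}\cdot 2^{-dk/2}b_k=\sum_{k\le j+5}b_k$, while the factor $\sum_{\alpha}\|P_{j+\alpha}\omega\|_{L^2}$ carries a $2^{-d(j+\alpha)/2}$; combined with the leftover $2^{dj/2}$ this leaves $2^{-d\alpha/2}\sum_\alpha b_{j+\alpha}$, and since $|\alpha|\le 5$ the factor $2^{-d\alpha/2}$ is a universal constant absorbed into $C$, giving $\big(\sum_{k\le j+5}b_k\big)\sum_{\alpha=-5}^5 b_{j+\alpha}$. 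For the high-high group, the summand $2^{dj/2}\|P_k\omega\|_{L^2}\|P_{k+\alpha}\omega\|_{L^2}$ becomes, after the extra $2^{dj/2}$ from the conversion, $2^{dj}\cdot 2^{-dk/2}b_k\cdot 2^{-d(k+\alpha)/2}b_{k+\alpha}=2^{d(j-k)}\cdot 2^{-d\alpha/2}\,b_k b_{k+\alpha}$, and again $2^{-d\alpha/2}$ is absorbed into $C$, yielding $\sum_{k\ge j}2^{d(j-k)}\sum_{\alpha=-5}^5 b_k b_{k+\alpha}$. Assembling these three pieces gives precisely \eqref{est2}.

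There is essentially no obstacle here; the only point requiring a word of care is that the Littlewood–Paley norm equivalences $\|P_j\omega\|_{L^2}\sim 2^j\|P_ju\|_{L^2}$ hold with implied constants independent of $j$, which is standard for a divergence-free vector field, and that the finitely many shift factors $2^{-d\alpha/2}$ with $|\alpha|\le 5$ are bounded by a universal constant. With these remarks the lemma follows directly from Lemma \ref{trick}.
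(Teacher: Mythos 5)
Your translation is exactly what the paper does: Lemma \ref{best} is stated there with no proof beyond the remark that it ``rewrites'' Lemma \ref{trick} in the variables $b_j$, and your bookkeeping of the conversion factors $2^{dj/2}$, $2^{-dk/2}$, and the harmless $2^{-d\alpha/2}$ with $|\alpha|\leq 5$ is correct. The one point you (like the paper) gloss over is that an upper bound on $\frac{d}{dt}\|P_j\omega\|_{L^2}$ does not formally transfer to $\frac{d}{dt}b_j$ through the mere norm equivalence $\|P_j\omega\|_{L^2}\sim 2^j\|P_ju\|_{L^2}$, since equivalent quantities can have very different derivatives; the clean fix is either to define $b_j$ as $2^{dj/2}\|P_j\omega\|_{L^2}$ outright or to run the energy pairing of Lemma \ref{trick} directly on $P_ju$, and with that cosmetic adjustment your argument is complete.
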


Let $E$ denote $||u_0||_{L^2}$. Because energy is dissipating, we have for each $j$ that
\begin{equation} \label{energy} b_j(t) \leq E  2^{(d+2) j \over 2} .\end{equation}

We assume that $c(t)$ goes to infinity. We denote by $t_k$, the first time at which $c(t) 
\geq 2^k$. We
are interested in studying what happens between $t_k$ and $t_{k+1}$.

Clearly for $t_{k} < t < t_{k+1}$, we have that by the definition of $c(t)$ for any $j$ the estimate
\begin{equation} \label{trivial}
b_{j}(t) \leq 2^{k+1}.
\end{equation}

However for $j$ sufficiently large, the dissipation term begins to dominate the others. We define
$j_{k,u}$ to be the solution for $j$ of the equation
$$2^k = {2^{{(d+2)j \over 2}} \over j^{\gamma}}.$$

We prove the following barrier estimate. Roughly speaking, it says that when the Besov norm $c(t)$ is small,
we have that $b_j(t)$ is not increasing for large $j$ since dissipation dominates the nonlinear term. However
in light of lemma \ref{best}, we see that shrinkage from dissipation is only in proportion to $b_j(t)$ while
growth from the nonlinear term can come from neighboring Littlewood Paley pieces. The barrier estimate comes
from examining $b_j(t)$ only when it is large compared to its neighbors.

\bigskip

\begin{lemma} \label{barrier} For any $t$  with $t_{k} < t < t_{k+1}$, we have the estimate
\begin{equation} \label{bar} b_j(t) \leq C 2^k  2^{{j_{k,u} - j \over 10}},\end{equation}
with $C$ a universal constant.
\end{lemma}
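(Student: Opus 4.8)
The plan is to prove the barrier estimate \eqref{bar} by a continuity/barrier argument: fix $k$, work on the interval $t_k < t < t_{k+1}$, and show that the function
$$
\beta_j(t) := C 2^k 2^{(j_{k,u}-j)/10}
$$
is a \emph{supersolution} of the system \eqref{est2} in the regime where it matters, so that $b_j(t)$ cannot cross it. More precisely, I would argue by contradiction: suppose that at some time $t^*\in(t_k,t_{k+1})$ and some index $j^*$ we have $b_{j^*}(t^*) = \beta_{j^*}(t^*)$ while $b_j(t)<\beta_j(t)$ for all $j$ and all $t<t^*$ (or handle the first crossing time directly). Note that for small $j$ (where $j \lesssim j_{k,u}$ with a suitable constant) the bound \eqref{bar} is weaker than the trivial bound \eqref{trivial}, so there is nothing to prove there; the estimate only has content for $j$ a definite amount larger than some threshold, and in that range the geometric decay factor $2^{-j/10}$ is doing real work. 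I would also use the a priori energy bound \eqref{energy} to know $b_j$ is finite and the barrier argument is not vacuous at $t_k$.

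The core computation is to bound $db_{j^*}/dt$ at the crossing point using \eqref{est2}, under the inductive hypothesis $b_k(t^*)\le \beta_k(t^*)$ for all $k$ \emph{and} $\sum_k b_k(t^*) = c(t^*) \le 2^{k+1}$. The three nonlinear contributions must each be shown to be dominated by the dissipation term $2^{(d+2)j^*/2} j^{*\,-2\gamma} b_{j^*}$, at least up to a factor that still leaves room to push $b_{j^*}$ back below the barrier. For the low-high term $\big(\sum_{k\le j^*+5} b_k\big)\sum_\alpha b_{j^*+\alpha}$ I would bound the first factor by $c(t^*)\le 2^{k+1}$ and the second by $\sum_\alpha \beta_{j^*+\alpha}\lesssim \beta_{j^*}(t^*)$, getting something like $2^{k}\beta_{j^*}(t^*)$; this must be compared to $2^{(d+2)j^*/2}j^{*\,-2\gamma}\beta_{j^*}(t^*)$, and since $j^*$ is large relative to $j_{k,u}$ we have $2^{(d+2)j^*/2}/j^{*\,2\gamma} \gg 2^k$ by the very definition of $j_{k,u}$ (here the fact that $\gamma \le \tfrac14 < \tfrac12$, so that $2\gamma < 1$, keeps the logarithmic factor $j^{*\,2\gamma}$ harmless against the exponential gap). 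The high-low/commutator terms are of the same shape. The high-high tail $\sum_{k\ge j^*} 2^{d(j^*-k)}\sum_\alpha b_k b_{k+\alpha}$ I would bound using $b_k(t^*)\le \beta_k(t^*) = C2^k 2^{(j_{k,u}-k)/10}$ for each $k\ge j^*$: the product $b_k b_{k+\alpha}$ decays like $2^{-2k/10}$ in $k$, the geometric weight $2^{d(j^*-k)}$ only helps, so the whole sum is dominated by its first term, of size $\lesssim \beta_{j^*}(t^*)^2 \lesssim 2^k 2^{(j_{k,u}-j^*)/10}\beta_{j^*}(t^*)$; again for $j^*$ large compared to $j_{k,u}$ this is $\ll 2^{(d+2)j^*/2}j^{*\,-2\gamma}\beta_{j^*}(t^*)$, so the dissipation wins. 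Consequently $db_{j^*}/dt < 0$ at the crossing, contradicting the assumption that $b_{j^*}$ is reaching the (time-independent, hence nondecreasing-barrier) value $\beta_{j^*}$ from below.

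The main obstacle I anticipate is organizing the bookkeeping so that the "large $j$" threshold at which the dissipation genuinely dominates is \emph{below} the range where \eqref{bar} first becomes a nontrivial constraint — i.e., checking that the crossover point $j_{k,u}$ (defined by $2^k = 2^{(d+2)j/2}/j^\gamma$) and the constant in the exponent $\tfrac1{10}$ are compatible, so that on the relevant window $j_{\text{threshold}} \lesssim j \lesssim j_{k,u}$ both the trivial bound at the bottom and the supersolution property in the middle hold simultaneously, with the universal constant $C$ absorbing all the $O(1)$ losses (the $\sum_{\alpha=-5}^5$, the $k+5$ shifts, the commutator constants from Lemma \ref{trick}). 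A secondary subtlety is the behaviour near $t = t_k$: one must check \eqref{bar} holds at (or just after) $t_k$ as the base case, which should follow from continuity of $c(t)$ together with $c(t_k)\le 2^k$ and the trivial bound, possibly after shrinking the interval slightly or choosing $C$ large. None of these steps is deep, but getting the quantifiers on $j$, $k$, and the constants to line up is where the care is required.
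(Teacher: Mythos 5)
Your proposal is correct and follows essentially the same route as the paper: a first-crossing (barrier/supersolution) argument in which, at the breach scale, the low-high/high-low terms are bounded by $2^k b_j$, the high-high tail is summed geometrically using the barrier itself to get $\lesssim b_j^2 \lesssim 2^k b_j$, and the dissipation term dominates because $j$ exceeds $j_{k,u}$ by a definite amount once $C$ is large. The only differences are presentational (you spell out the base case at $t_k$ and the quantifier bookkeeping, which the paper leaves implicit).
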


\begin{proof}  We observe that in light of the inequality \ref{trivial}, the barrier estimate \ref{bar}
is only nontrivial for $k> j_{k,u} + c$, where $c$ is a constant that depends only on $C$
 in \ref{bar}. Now we assume that $t$ is the last time up to which the estimate \ref{bar} holds
and that $l$ represents the scale at which the barrier estimate is breached. That is, we assume
that the inequality \ref{bar} holds at time $t$, but $b_l(t) = C 2^k  2^{{l - k \over 10}},$
and ${db_l \over dt} \geq 0,$ so that it is possible that $b_l$ will be larger at slightly later times.
We now examine the inequality \ref{est2}. We observe that the low-high and high-low terms satisfy the estimate
$$\left( \sum_{s \leq l+5}  b_s(t) \right)
\sum_{\alpha=-5}^5 b_{l+\alpha}(t) \lesssim 2^k b_l(t).$$
Similarly, we see that the high high terms are dominated by a geometric sum:
$$\sum_{s \geq l} 2^{{d(l-s)}} \sum_{\alpha=-5}^5 b_s(t) b_{s+\alpha}(t) \lesssim b_l(t)^2 \lesssim  2^k b_{l}(t).$$
However, by choosing $C$ sufficiently large, we see that $l$ is sufficiently larger than $j_{k,u}$ so that
${2^{{(d+2)l \over 2}} \over l^{\gamma}}$ dominates $2^k$. Thus the negative term in inequality \ref{est2} dominates
the positive term and we have shown ${db_l \over dt} < 0$, a contradiction. \end{proof}

Now in light of the estimates \ref{energy} and \ref{barrier} we see that there is a universal constant $c$ and a scale
$j_{k,d}$ so that
$$\left ( \sum_{j=0}^{j_{k,d}}  + \sum_{j=j_{k,u}+c} \right )  b_k(t) \leq  {2^k \over 10},$$
with 
$$j_{k,u}-j_{k,d}+c \lesssim \log k,$$
as long as $t_k < t < t_{k+1}$. Thus there are at most $\log k$ scales that can contribute to the growth of the 
Besov norm $c(t)$ from $2^k$ to $2^{k+1}$. We pigeonhole to find a scale with a lot of growth. It turns out that
the worst case for us is when the growth occurs near scale $j_{k,u}$. We pick an $\epsilon>0$ sufficiently small
to be determined later so that we are guaranteed that there is some integer $s \geq -c$ so that
 we have, letting $l=j_{k,u}-s$, the quantity  $b_{l} (t)$ increasing by $\gtrsim  2^{k-s\epsilon}$ in the time 
period between $t_k$ and $t_{k+1}$.
Moreover in light of the definition of $c(t)$ and the inequalities \ref{trivial} and \ref{bar}, we can
control the right hand side of the inequality \ref{est2} to obtain that for $t_k < t <t_{k+1}$, we have
$${db_l \over dt}  \lesssim 2^{2k},$$
so that the increase of $b_l(t)$ takes place over a time period of length $\gtrsim 2^{-k-s\epsilon}$.
It turns out that this guarantees us enough dissipation to reach our conclusion.

Recall that from conservation of energy we have that the dissipation is bounded:
  \begin{align} \label{diss} 
&\int_0^{\infty}  \langle Du , Du \rangle  dt  \\
 \sim  &\sum_{j=0}^{\infty} \int_0^{\infty}
{2^{(d+2) j \over 2} \over j^{2 \gamma} }  ||P_j u(t)||_{L^2}^2 dt  \\
\sim &\sum_{j=0}^{\infty} \int_0^{\infty} {2^{-(d+2) j \over 2} \over j^{2 \gamma} } b_j(t)^2  dt \\
\lesssim E^2  \end{align} 

Now we use the fact that $b_l(t)$ is $\gtrsim  {2^{k-s\epsilon}}$ for a time period of  
$\gtrsim {1 \over 2^{k+s\epsilon}}$ to obtain a lower bound on the $l$th term in the third line of \ref{diss}.

We calculate that
$$ {2^{-(d+2) l \over 2} \over l^{2 \gamma} }= {2^{-(d+2) l \over 2} l^{2 \gamma} \over l^{4 \gamma} }
\sim {2^{-k} 2^{(d+2)s \over 2} \over k^{4 \gamma}},$$
where in the final equality we used the definition of $j_{k,u}$ and the fact that $l \sim k$.
Thus estimating the $l$th term of the third line of \ref{diss}, we see that we get 
$\gtrsim {2^{(d+2)s \over 2} \over k^{4 \gamma} 2^{3 \epsilon s}}$ dissipation between $t_k$ and $t_{k+1}$.
Choosing  $\epsilon < {d+2 \over 6}$, we get that the total amount of dissipation between $t_k$ and $t_{k+1}$
is $\gtrsim {1 \over k^{4 \gamma}}$. The sum of these
quantities diverges as long as $\gamma \leq {1 \over 4}$ and so we reach a contradiction. What we have shown is that
the Besov norm $c(t)$ is uniformly bounded.

Now all that remains is the relatively simple and standard task of showing that control on the Besov norm
guarantees that an initial solution remains smooth. For any $\mu>0$, there is a constant $F$ so that
$$b_j(0) \leq F 2^{-\mu j}.$$
Now we observe one more barrier estimate.

\bigskip

\begin{lemma} \label{lastbar} Let $c(t)<M$ for all time $t$. There is a universal constant $K$ so that
for all $j$
\begin{equation} \label{conc} b_j(t) \leq  e^{KMt} F 2^{-\mu j}. \end{equation}
\end{lemma}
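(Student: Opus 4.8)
The plan is to run a soft barrier (continuity) argument against the system of differential inequalities \ref{est2}, in the same style as the proof of Lemma \ref{barrier}; the one structural difference is that here the exponentially growing envelope absorbs every positive term on the right-hand side, so the dissipation term plays no role and may simply be discarded.

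Set $B_j(t)=e^{KMt}F2^{-\mu j}$, with $K$ to be fixed at the end. By the hypothesis on the initial data, $b_j(0)\le F2^{-\mu j}=B_j(0)$ for every $j$, and $\frac{d}{dt}B_j(t)=KM\,B_j(t)$. Suppose \ref{conc} fails. Using that each $b_j(\cdot)$ is continuous (the solution being smooth on its interval of existence) together with the a priori bound $b_j(t)\le c(t)<M$ — which is exactly what keeps the first breach of the envelope from escaping to $j=\infty$ — there is a first time $t$ and a scale $l$ with $b_l(t)=B_l(t)$ while $b_j(s)\le B_j(s)$ for all $j$ and all $s\le t$. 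Since $b_l-B_l\le 0$ on $[0,t]$ with equality at $t$, the one-sided derivative satisfies $\frac{d}{dt}b_l(t)\ge\frac{d}{dt}B_l(t)=KM\,b_l(t)$.

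Now estimate the right-hand side of \ref{est2} at this time, discarding the nonpositive dissipation term. In the low-high/high-low term the first factor is $\sum_{k\le l+5}b_k(t)\le c(t)<M$, and by the barrier $\sum_{\alpha=-5}^5 b_{l+\alpha}(t)\le\sum_{\alpha=-5}^5 B_{l+\alpha}(t)\le 11\cdot 2^{5\mu}B_l(t)=11\cdot 2^{5\mu}b_l(t)$, so this term is $\lesssim 2^{5\mu}M\,b_l(t)$. In the high-high term $\sum_{\alpha=-5}^5 b_{k+\alpha}(t)\le c(t)<M$, so it is at most $M\sum_{k\ge l}2^{d(l-k)}b_k(t)\le M\sum_{k\ge l}2^{d(l-k)}B_k(t)$, and this geometric series equals $B_l(t)/(1-2^{-(d+\mu)})\le 2B_l(t)=2b_l(t)$; hence this term is $\lesssim M\,b_l(t)$. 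Altogether $\frac{d}{dt}b_l(t)\le C'(\mu)\,M\,b_l(t)$ for a constant $C'(\mu)$ depending only on the constant $C$ of Lemma \ref{best}, on $d$, and on $\mu$. We may assume $F>0$ (else $u_0\equiv 0$ and there is nothing to prove), so $b_l(t)=B_l(t)>0$; comparing with $\frac{d}{dt}b_l(t)\ge KM\,b_l(t)$ and cancelling $M\,b_l(t)$ gives $K\le C'(\mu)$. Fixing $K=C'(\mu)+1$ at the outset therefore produces a contradiction, and \ref{conc} holds.

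I do not expect a serious obstacle here: the argument is entirely soft. The only point that needs care is the continuity step — confirming that the first breach of the envelope is realized at a finite scale $l$ and at a time $t$ at which the one-sided derivative comparison is legitimate — and this is precisely what the uniform bound $b_j(t)\le c(t)<M$ and the decay of $B_j(t)$ in $j$ deliver (one could alternatively phrase the whole thing as Gr\"onwall's inequality applied to $\sup_j b_j(t)2^{\mu j}$). One caveat: the constant $K$ obtained this way depends on $\mu$ through the factor $2^{5\mu}$ (and on $d$), but not on the solution, on $M$, or on $F$, which is all that is needed, since in the smoothness argument of Section 3 the exponent $\mu$ is arbitrary but fixed.
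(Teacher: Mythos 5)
Your proposal is correct and is essentially the paper's own argument: a continuity/barrier scheme in which the positive terms of \ref{est2} are bounded by $\lesssim M b_l(t)$ at the first breach (using $\sum_{k\le l+5}b_k\le c(t)<M$ and the barrier on neighboring scales), after which Gr\"onwall's inequality with $K$ large prevents the breach. You are in fact slightly more careful than the paper, which writes the bound as $Mb_j+b_j^2\lesssim Mb_j$ and calls $K$ ``universal'': as you note, the constant genuinely picks up a harmless dependence on $\mu$ (and $d$) through the factor $2^{5\mu}$ from the shifted indices $b_{l+\alpha}$.
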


\begin{proof}

As before, we assume that the inequality \ref{conc} holds until time $t$ and $b_j(t)= e^{KMt} F 2^{-\mu j}$.
(Note that just the definition of $c(t)$ guarantees us that this quantity is at most $M$.) Then we just examine
the positive terms in the inequality \ref{est2}. We see immediately that
$${d b_j \over dt} \lesssim M b_j(t) + b_j(t)^2  \lesssim M b_j(t).$$
Thus by Gronwall's inequality, the barrier can't be broken.

\end{proof}

In light of the fact that $c(t)$ is uniformly bounded and in light of Lemma \ref{lastbar}, we have proven
Theorem \ref{main}.

\bigskip
\bigskip

\tiny

\textsc{N. KATZ, DEPARTMENT OF MATHEMATICS, INDIANA UNIVERSITY, BLOOMINGTON IN}

{\it nhkatz@indiana.edu}

\bigskip

\textsc{A. TAPAY, DEPARTMENT OF MATHEMATICS, INDIANA UNIVERSITY, BLOOMINGTON IN}

{\it atapay@indiana.edu}


\begin{thebibliography}{5}

\vskip.125in

\bibitem[CS]{CS} A. Cheskidov,R. Shvidkoy {\it A unified approach to regularity problems for the 3D Navier-Stokes and Euler equations: 
the use of Kolmogorov's dissipation range}  arXiv  1102.1944


\bibitem[DKV]{DKV} M. Dabkowski,A. Kiselev, and V. Vicol  {\it Global well-posedness for a slightly supercritical 
surface quasi-geostrophic equation}  arXiv 1106.2137

\bibitem[Pav]{Pav}  N. Pavlovic {\it Use of Littlewood-Paley operators for the equations
of fluid motion}
Ph.D. Thesis, University of Illinois at Chicago, (2002)


\bibitem[Tao]{Tao}  T. Tao {\it Global regularity for a logarithmically supercritical
hyperdissipative
Navier-Stokes equation}  Anal PDE 2 (2009), no. 3  361-366

\bibitem[Tao2]{Tao2} T. Tao {\it Harmonic analysis in the phase plane} UCLA Math 245A
Winter 2001 notes:
http://www.math.ucla.edu/~tao/254a.1.01w/

\bibitem[Tao3]{Tao3} T. Tao {\it Global regularity for a logarithmically supercritical
defocusing nonlinear wave equation for spherically symmetric data} J. Hyperbolic Diff. Equations
4 (2007) 259-266







\end{thebibliography}
\end{document}